\DeclareMathOperator*{\argmin}{arg\,min}
\newtheorem{thm}{Theorem}[section]
\newtheorem{prop}{Proposition}[section]
\newtheorem{proof}{Proof}[section]
\newtheorem{definition}{Definition}[section]
\newtheorem{rmk}{Remark}[section]
\begin{document}

\title{Note on the Existence of Minimizers for Variational Geometric Active Contours}

\author{%
El Hadji S. Diop\thanks{Department of Mathematics, University of Thies, Thies BP 967, Senegal. Email: ehsdiop@hotmail.com}
\and 
Val\'erie Burdin\thanks{Image and Information Department, Institut Mines Telecom - Telecom Bretagne, Technopole Brest-Iroise CS 83818, 29238 Brest Cedex 3, France. Email: valerie.burdin@imt-atlantique.fr}
\and 
V. B. Surya Prasath\thanks{Division of Biomedical Informatics, Cincinnati Children's Hospital Medical Center, Cincinnati, OH 45229 USA. Also with the Departments of Biomedical Informatics, Pediatrics, Electrical Engineering and Computer Science, University of Cincinnati, OH, USA. Email: surya.iit@gmail.com, prasatsa@uc.edu}}
\date{}
\maketitle
\begin{abstract}
We propose here a proof of existence of a minimizer of a segmentation functional based on a priori information on target shapes, and formulated with level sets.  The existence of a minimizer is very important, because it guarantees the convergence of any numerical methods (either gradient descents techniques and variants, or PDE resolutions) used to solve the segmentation model. This work can also be used in many other segmentation models to prove the existence of a minimizer.

\noindent \textbf{Keywords}: Image Segmentation, Energy Minimization, Bounded Variation, Variational Model, Level sets, Shape priors.
\end{abstract}
\section{Introduction}\label{sec:intro}

Image segmentation is still subjects of intensive researches due to emerging acquisition techniques and capacity of image processing and computer vision models in modeling and solving real life problems. In many applications, e.g. medical imaging, it is essential by just a first step towards the true application (e.g. results analysis, therapeutic evaluation, $\dots$). Accuracy is then very important, and due to that fact, we developed a segmentation model \cite{Diop2013} for $X$-$rays$ medical images that suffer from a poor contrast between objects of interests (femur or tibia for instance) and other structures (soft tissues, pelvis, hip, fibula, tarsus, metatarsus, $\dots$), problems of edge salience, occlusions phenomena in bones joints, $\dots$. Prior shapes were incorporated in a level set-based variational formulation of the segmentation problem, and then, as common, associated Euler-Lagrange equations were derived constituting our segmentation model. A proof of existence of a minimizer of a segmentation energy is not always provided, the segmentation model is solved numerically (gradient descent, partial differential equation numerical resolutions, $\dots$), rather. The main reason is the complexity of the designed segmentation functionals, and then, many criteria (e.g. convexity) are not usually satisfied in order to apply classical optimization results. Many techniques have been developed e.g. convex relaxation techniques \cite{Pock2009,Brown2012}, which are weak formulation of the original problem. Although the existence of a minimizer was not proven, the efficiency and robustness of the segmentation model \cite{Diop2013} were showed on various image types; namely, on synthetic images, digitally reconstructed images, and real radiographic images. Also, quantitative evaluations of obtained segmentation results were provided. Here, we propose a proof of existence of a minimizer for the variational segmentation functional \cite{Diop2013}. Different works \cite{Chen2002,Bresson2006,Chung2009,Li2010} and more recently \cite{Moren2014,Wu2015,prasath2015cell,Ozere2015,pelapur20173d} pursued the same goal, even if the proposed segmentation functionals were different.

The article is organized as follows. The segmentation functional is presented in Section~\ref{sect2}. The proof of the existence of a minimizer is proposed in Section~\ref{sect3}. We conclude with some perspectives in Section~\ref{sect4}.

\section{Variational Image Segmentation Model}\label{sect2}

Let $I:\Omega\rightarrow \mathds{R}, I =\lbrace I(x), x\in \Omega\rbrace$ be a continuous image, $\Omega\subset\mathds{R}^2$ its domain assumed to be open, bounded and with a Lipschitz boundary denoted by $\partial\Omega$.

\subsection{Prior Shapes Design}

We consider a set $\lbrace C_n\rbrace_{1\leq n \leq N}$ of $N$ aligned contours embedded as signed distance functions denoted
by $\phi_n$, $\forall~n\geq 0$. Let $\lbrace\phi_n\rbrace_{1\leq n \leq N}$ be the training dataset, and denote
$\bar{\phi} = \displaystyle\frac{1}{N}\sum_{n=1}^N \phi_n$ its mean. In order to learn the local deformations of the dataset,
we follow \cite{Leventon2000} and perform a $PCA$ on $\lbrace\phi_n\rbrace_{1\leq n \leq N}$. Advantages in doing that on $(\phi_n)_n$
instead of on $(C_n)_n$ were already discussed \cite{Diop2013}. So, this means looking for the best orthonormal basis $\lbrace
u_k\rbrace_{1\leq k \leq K}$ s.t. the projection of $\phi_n$ on $(u_k)_k$ has a minimal distance in $L^2$ sense. Let $U$ be composed
with $p$ first columns of $V$ holding the orthogonal modes of variations, new shapes can be built then by: $\phi = \bar{\phi} 
+ U\lambda$, where $U = [u_i]_{1\leq i \leq p}$ is the eigenvectors matrix and $\lambda = [\lambda_i]^{\text{t}}_{1\leq i \leq 
p}$ the shape parameters vector.

\subsection{Segmentation Functional and Variational Formulation}

The segmentation functional is defined as follows~\cite{Diop2013}:
\begin{equation} 
\label{F}
F(\varphi,\lambda,V,I_{in},I_{out}) = \dfrac{\alpha}{2} F_1(\varphi) + F_2(\varphi,\lambda,V) + \beta F_3(\varphi) + \nu F_4(\lambda,V,I_{in},I_{out}), 
\end{equation}
where $\alpha, \beta, \nu >0$ for counterbalancing different energy terms defined as follows: 
\begin{align}
&F_1(\varphi) = \int_\Omega \left( \lvert\nabla\varphi\rvert - 1\right)^2\mathrm{d} x, \label{F1}
\\
&F_2(\varphi,\lambda,V) = \int_\Omega [\xi g + \dfrac{\gamma}{2}\phi^2(\lambda,h(x))]\delta(\varphi)
\lvert\nabla\varphi\rvert\mathrm{d} x, \label{F2}\\
&F_3(\varphi) = \int_\Omega g\,H(-\varphi)\mathrm{d} x, \label{F3} \text{ and }\\ 
& F_4(\lambda,V,I_{in},I_{out}) = \int_\Omega \lvert I - I_{in}\rvert^2 + \mu\lvert\nabla I_{in}\rvert^2 H(\phi(\lambda, h(x)))\, \mathrm{d} x \nonumber \\
 & + \int_\Omega \lvert I - I_{out}\rvert^2 + \mu\lvert\nabla I_{out}\rvert^2 (1 - H(\phi(\lambda,h(x))))\, \mathrm{d} x  +
 \zeta\int_\Omega d\mathcal{H}^1(C(\lambda,h(x))); \label{F4}
\end{align}
with $\varphi$ being the level set function; $\xi$ and $\gamma$ are positive parameters; $\delta(\cdotp)$
is the unidimensional Dirac distribution; $g$ is a positive and strictly non increasing function, typically: $g=\dfrac{1}{1+\eta\lvert\nabla G_\sigma\star I\rvert^2}$, $\star$ standing for the convolution operator, $\eta >0$, $G_\sigma$ is a Gaussian kernel with a variance $\sigma$. $I_{in}$ and $I_{out}$ designate the Mumford-Shah functional terms;
$\nu$ is a positive parameter. $H(\cdotp)$ is the Heaviside function,~i.e.~$H:\mathds{R}\longrightarrow\lbrace 0, 1\rbrace$, s.t. $H=1$ in $\mathds{R}^{+}$, $H=0$ in $\mathds{R}_{\star}^{-}$, and $\delta = H'$ in the sense of distributions. $\phi$ is the prior shape; $\lambda$ is the shape parameters vector; $h:\Omega\longrightarrow \Omega$; $x\longmapsto h(x) = \tau R_\theta x + T$ accounts the spatial rigid transformations achieved through a vector $V$ holding the scale parameter factor $\tau$, the rotation matrix $R_\theta$ of angle $\theta$ and the translation vector $T=[T_x,\,T_y]$; $C(\lambda,h(x)) = \lbrace x\in \Omega$ s.t. $\phi(\lambda,h(x)) = 0 \rbrace$; $\mathcal{H}^1$ is the 
one dimensional Hausdorff measure.

\begin{itemize}
	\item[$\bullet$] $F_1$ (\ref{F1}) guarantees the signed distance function property, and avoids the re-initialization process. In fact, when keeping $\lvert\nabla\varphi\rvert$ bounded, one ensures the correct computations of $\varphi$ derivatives. The most common way is to apply the re-initialization procedure by periodically solving the PDE \cite{Sussman1994,Zhao1996,Chan2001}:
\begin{equation}
\label{re_init}
 \left\{
\begin{array}{c}
 \dfrac{\partial\varphi}{\partial t} = \text{sign}(\varphi_0)(1-\lvert\nabla\varphi\rvert) \\
 \varphi(x,0) = \varphi_0(x),
\end{array}
 \right. 
\end{equation}
where $\varphi_0$ refers to the level set to be re-initialized. 
	\item[$\bullet$] $F_2$ (\ref{F2}) makes the active contour evolve towards high gradients areas, and also towards similar regions of the prior shape. Minimizing $F_2$ increases the similarity between the active contour and the shape prior. 
	\item[$\bullet$] $F_3$ (\ref{F3}) could be interpreted as a weighted area term of the target object of interests. Notice that 
if $g$ equal to $1$, then $F_3$ is the area of the region of the object of interests; i.e.~$\lbrace x\in \Omega, \text{ s.t. } \varphi(x) < 0 \rbrace$. Minimizing it provides another force pushing the active contour quickly towards the edges.
	\item[$\bullet$]  $F_4$ (\ref{F4})  is a modified version of the complete Mumford-Shah functional energy \cite{Mumford1989}, which was the most suitable model for the bi-planar $X$-ray images we dealt with, since anatomic regions of interests  have almost homogeneous gray level image intensities.
\end{itemize}

The level set formulation of the variational problem is thus given by:
\begin{align} 
\label{form_var}
\varphi,\lambda,V,I_{in},I_{out} & = \argmin_{\varphi,\lambda,V,I_{in},I_{out}} F(\varphi,\lambda,V,I_{in},I_{out}) \nonumber\\
                                 & = \argmin_{\varphi,\lambda,V,I_{in},I_{out}}\dfrac{\alpha}{2} F_1(\varphi) + F_2(\varphi,\lambda,V) \nonumber\\ 
                                 &+  \beta F_3(\varphi) + \nu F_4(\lambda,V,I_{in},I_{out}).
\end{align}
By the means of calculus of variations \cite{Evans1998,Aubert2000}, we solve the Euler-Lagrange equations associated to the variational formulation (\ref{form_var}) to finally obtain the evolution equations that constituted the segmentation model~\cite{Diop2013}.

\section{Minimum Existence of the Segmentation Functional}\label{sect3}

Let us recall first definitions and most important properties related to the space of Bounded Variation ($BV$)~\cite{Evans1992,Giusti1994,Ambrosio2000,Ambrosio2001}:
\begin{definition}
Let $u\in L^1(\Omega)$, and denote $$\int_\Omega |Du| = \sup\left\lbrace\int_\Omega u~\text{div}(\varphi)\mathrm{d} x;~\varphi=(\varphi_1,\varphi_2)
\in C_0^1(\Omega,\mathds{R}^2) \text{ and } \lVert\varphi\rVert\leq 1\right\rbrace,$$
where $C_0^1(\Omega)$ is the space of continuously differential functions with compact support in $\Omega$, $\text{div}(\varphi)
=\dfrac{\partial\varphi_1}{\partial x_1}+\dfrac{\partial\varphi_2}{\partial x_2}$ where derivatives are taken in a distributional
sense, and $\lVert\varphi\rVert=\lVert(\varphi_1^2+\varphi_2^2)^{1/2}\rVert_{L^\infty(\Omega)}$. The space of functions of bounded variation on $\Omega$, denoted BV($\Omega$), is the set of functions defined by $$BV(\Omega) = \left\lbrace u\in L^1(\Omega);
s.t. \int_\Omega |Du|<\infty\right\rbrace.$$
\end{definition}
\begin{rmk}
BV($\Omega$) is a Banach space when endowed with the norm $\lVert u\rVert_{\text{BV}(\Omega)}=\lVert u\rVert_{L^1(\Omega)}
+ \int_\Omega |Du|$.
\end{rmk}
\begin{prop}[Lower semi continuity]
 Let $(u_n)_n$ be a sequence of functions $\in\text{BV}(\Omega)$ s.t. $u_n\underset{n}\longrightarrow u$ in $L^1_{loc}(\Omega)$. Then,
 one has: $$\int_\Omega|Du|\leq  \underset{n}{\lim\inf}\int_\Omega |D u_n|.$$
\end{prop}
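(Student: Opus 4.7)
The plan is to use the variational (duality) definition of the BV seminorm given just above the statement, which expresses $\int_\Omega |Du|$ as a supremum over test vector fields. Lower semi-continuity with respect to $L^1_{\text{loc}}$ convergence is then a soft consequence of this supremum representation: the functional is a pointwise supremum of functionals that are continuous under $L^1_{\text{loc}}$ convergence, and any supremum of continuous functionals is automatically lower semi-continuous.

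Concretely, I would fix an arbitrary test field $\varphi = (\varphi_1,\varphi_2) \in C_0^1(\Omega,\mathds{R}^2)$ with $\lVert\varphi\rVert \leq 1$ and let $K = \operatorname{supp}\varphi \subset\subset \Omega$. Then $\operatorname{div}(\varphi) \in C_0(\Omega)$ is bounded, say by some constant $M_\varphi$. Since $u_n \to u$ in $L^1_{\text{loc}}(\Omega)$, we have $u_n \to u$ in $L^1(K)$, so
\begin{equation*}
\left| \int_\Omega (u_n - u)\,\operatorname{div}(\varphi)\,\mathrm{d}x \right| \leq M_\varphi \int_K |u_n - u|\,\mathrm{d}x \xrightarrow[n\to\infty]{} 0.
\end{equation*}
Thus $\int_\Omega u_n\,\operatorname{div}(\varphi)\,\mathrm{d}x \to \int_\Omega u\,\operatorname{div}(\varphi)\,\mathrm{d}x$.

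Next, the definition of the total variation immediately yields $\int_\Omega u_n\,\operatorname{div}(\varphi)\,\mathrm{d}x \leq \int_\Omega |Du_n|$ for every $n$. Passing to the $\liminf$ on the right (the left-hand side converges, so $\lim = \liminf$) gives
\begin{equation*}
\int_\Omega u\,\operatorname{div}(\varphi)\,\mathrm{d}x \;=\; \lim_{n\to\infty}\int_\Omega u_n\,\operatorname{div}(\varphi)\,\mathrm{d}x \;\leq\; \liminf_{n\to\infty}\int_\Omega |Du_n|.
\end{equation*}
The bound on the right is independent of $\varphi$, so I conclude by taking the supremum over all admissible $\varphi$ on the left, obtaining $\int_\Omega |Du| \leq \liminf_n \int_\Omega |Du_n|$, which also makes clear that $u \in BV(\Omega)$ as soon as the right-hand side is finite.

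I do not expect a genuine obstacle here: the only subtle point is justifying the passage to the limit in $\int_\Omega u_n\,\operatorname{div}(\varphi)\,\mathrm{d}x$, and this is handled cleanly by the compact support of $\varphi$, which converts the hypothesis of $L^1_{\text{loc}}$ convergence into $L^1$ convergence on the relevant set $K$. The final supremum step is purely formal, following the general principle that a supremum of lower semi-continuous (indeed continuous) functionals is lower semi-continuous.
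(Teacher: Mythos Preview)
Your argument is correct and is exactly the standard textbook proof of lower semi-continuity of the total variation. Note, however, that the paper does not actually supply its own proof of this proposition: it is stated as a preliminary fact recalled from the references \cite{Evans1992,Giusti1994,Ambrosio2000,Ambrosio2001}, and is then used (for instance in the derivation of \eqref{L1}) inside the proof of the main theorem. Your duality argument---freezing a test field $\varphi\in C_0^1(\Omega,\mathds{R}^2)$, using compact support to pass to the limit via $L^1_{\text{loc}}$ convergence, bounding by $\int_\Omega|Du_n|$, and finally taking the supremum over $\varphi$---is precisely the proof one finds in those references, so there is nothing to compare: you have reproduced the canonical argument that the paper is implicitly invoking.
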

\begin{prop}[Approximations]
 Let $u\in\text{BV}(\Omega)$. Then, there exists a sequence $(u_n)_n$ of functions belonging to $\text{BV}(\Omega)\cap C^\infty(\Omega)$
s.t. : $$u_n\underset{n}\longrightarrow u \text{ in } L^1(\Omega) \text{ and } \int_\Omega|D u_n|\underset{n}\longrightarrow \int_\Omega 
|D u|.$$
\end{prop}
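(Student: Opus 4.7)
The plan is to construct, for each $\varepsilon > 0$, a smooth BV approximation $u_\varepsilon$ via piecewise mollification against a smooth partition of unity on $\Omega$, in such a way that $\|u_\varepsilon - u\|_{L^1} < \varepsilon$ and $\int_\Omega |\nabla u_\varepsilon|\,\mathrm{d} x \leq \int_\Omega |Du| + \varepsilon$. The lower semicontinuity proposition will then upgrade the latter to equality in the limit, and taking $\varepsilon = 1/n$ produces the desired sequence.

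For the construction, I would exhaust $\Omega$ by open sets $\Omega_0 \subset\subset \Omega_1 \subset\subset \cdots$ with $\bigcup_k \Omega_k = \Omega$, chosen so that $|Du|(\Omega \setminus \Omega_0) < \varepsilon$ (possible since $|Du|$ is a finite Radon measure on $\Omega$). Setting $A_0 = \Omega_1$ and $A_i = \Omega_{i+1} \setminus \overline{\Omega_{i-1}}$ for $i\geq 1$ gives a locally finite cover of $\Omega$ by relatively compact open sets. Take a subordinate smooth partition of unity $\{\zeta_i\}$ with $\sum_i \zeta_i \equiv 1$, and for each $i$ mollify $u\zeta_i$ with a kernel $\rho_{\varepsilon_i}$ of radius so small that $\rho_{\varepsilon_i}*(u\zeta_i)$ remains supported in $A_i$ and
$$\|\rho_{\varepsilon_i}*(u\zeta_i) - u\zeta_i\|_{L^1(\Omega)} + \|\rho_{\varepsilon_i}*(u\nabla\zeta_i) - u\nabla\zeta_i\|_{L^1(\Omega)} < \varepsilon\,2^{-i}.$$
Define $u_\varepsilon := \sum_i \rho_{\varepsilon_i}*(u\zeta_i)$. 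Local finiteness of the cover makes this a $C^\infty$ function on $\Omega$, and telescoping the first half of the estimates against $\sum_i\zeta_i = 1$ yields $\|u_\varepsilon - u\|_{L^1} < \varepsilon$.

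The main obstacle is the total variation bound. Using the BV product rule $\nabla(u\zeta_i) = \zeta_i\,Du + u\,\nabla\zeta_i\,\mathrm{d} x$ as vector-valued measures and commuting derivative with convolution,
$$\nabla u_\varepsilon = \sum_i \rho_{\varepsilon_i}*(\zeta_i Du) + \sum_i \bigl[\rho_{\varepsilon_i}*(u\,\nabla\zeta_i) - u\,\nabla\zeta_i\bigr],$$
where I exploit the identity $\sum_i u\,\nabla\zeta_i = u\,\nabla(\sum_i \zeta_i) = 0$ to subtract $u\,\nabla\zeta_i$ inside each bracket for free. The second sum has $L^1$-norm bounded by $\varepsilon$ by the choice of radii. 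The first sum satisfies the pointwise estimate $|\rho_{\varepsilon_i}*(\zeta_i Du)| \leq \rho_{\varepsilon_i}*(\zeta_i\,|Du|)$, so its $L^1$-norm is at most $\sum_i \int_\Omega \zeta_i\,\mathrm{d}|Du| = \int_\Omega |Du|$. Hence $\int_\Omega |\nabla u_\varepsilon|\,\mathrm{d} x \leq \int_\Omega |Du| + \varepsilon$.

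Choosing $\varepsilon = 1/n$ yields $u_n \in C^\infty(\Omega) \cap BV(\Omega)$ with $u_n \to u$ in $L^1(\Omega)$ and $\limsup_n \int_\Omega |\nabla u_n|\,\mathrm{d} x \leq \int_\Omega |Du|$; the reverse inequality $\int_\Omega |Du| \leq \liminf_n \int_\Omega |\nabla u_n|\,\mathrm{d} x$ is precisely the lower semicontinuity proposition, so the two combine into $\int_\Omega |\nabla u_n|\,\mathrm{d} x \to \int_\Omega |Du|$. The delicate point is the cancellation step: a naive term-by-term bound would absorb the finite overlap of the $A_i$ into a multiplicative constant and produce only $\int |\nabla u_\varepsilon| \leq C \int |Du|$, which is useless; it is the subtraction of $u\,\nabla\zeta_i$ inside each bracket that turns this overlap into a vanishing $L^1$ error, and this is the technical heart of the Anzellotti--Giaquinta approximation argument.
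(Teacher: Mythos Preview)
Your argument is correct: this is the standard Anzellotti--Giaquinta construction (mollification against a partition of unity subordinate to a locally finite cover, with the crucial cancellation $\sum_i u\,\nabla\zeta_i = 0$ to kill the overlap contribution), followed by lower semicontinuity to close the gap between $\limsup$ and $\liminf$. The estimate $\sum_i \int_\Omega |\rho_{\varepsilon_i}\ast(\zeta_i\,Du)|\,\mathrm{d} x \le \sum_i \int_\Omega \zeta_i\,\mathrm{d}|Du| = |Du|(\Omega)$ is in fact slightly sharper than the textbook version that bounds each term by $|Du|(A_i)$ and then counts overlaps.

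Regarding comparison with the paper: the paper does \emph{not} supply its own proof of this proposition. It is stated there as a background fact on $BV$ spaces, recalled from the cited references (Evans--Gariepy, Giusti, Ambrosio et al.), and then used in the proof of the main theorem. So you have provided a complete proof where the paper simply invokes the literature; your argument is exactly the one found in those references.
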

\begin{prop}[Compactness]
\label{compac}
Let $(u_n)_n$ be a uniformly bounded sequence of functions $\in\text{BV}(\Omega)$. Then, there exists a subsequence $(u_{\rho(n)})_n$ 
of $(u_n)_n$ and a function $u\in\text{BV}(\Omega)$ s.t.: $u_n\underset{n}\longrightarrow u \text{ in } L^1_{\text{loc}}(\Omega)$.
\end{prop}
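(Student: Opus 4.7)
The plan is to reduce the BV compactness statement to the classical Rellich--Kondrachov theorem applied on a compactly exhausted sequence of smooth subdomains, and then to glue the extracted local subsequences by a diagonal argument. The lower semicontinuity proposition already stated will then hand us membership of the limit in $\text{BV}(\Omega)$.

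First I would fix an exhaustion $\Omega_1 \Subset \Omega_2 \Subset \cdots \Subset \Omega$ of the open set $\Omega$ by open sets with Lipschitz boundary, chosen so that $\bigcup_k \Omega_k = \Omega$; any compact subset of $\Omega$ eventually lies inside some $\Omega_k$. The hypothesis is that $\lVert u_n\rVert_{L^1(\Omega)} + \int_\Omega|Du_n|$ is bounded by some constant $M$ independent of $n$. On each fixed $\Omega_k$, I would mollify $u_n$ with a standard Friedrichs kernel $\rho_\varepsilon$ of radius $\varepsilon < \operatorname{dist}(\Omega_k, \partial\Omega_{k+1})$ and observe, by a standard computation, that $\lVert \rho_\varepsilon * u_n\rVert_{W^{1,1}(\Omega_k)} \le C_k M$, uniformly in $n$ and in $\varepsilon$ small. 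Combined with the fact that $\rho_\varepsilon * u_n \to u_n$ in $L^1(\Omega_k)$ as $\varepsilon\to 0$, this yields a family in $W^{1,1}(\Omega_k)$ that is uniformly bounded.

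Next, the Rellich--Kondrachov embedding $W^{1,1}(\Omega_k)\hookrightarrow L^1(\Omega_k)$ is compact, since $\Omega_k$ has Lipschitz boundary. Applied for $k=1$, this produces a subsequence of $(u_n)_n$ converging in $L^1(\Omega_1)$ to some $u^{(1)}$; applied for $k=2$ to the already extracted subsequence, one gets a further subsequence converging in $L^1(\Omega_2)$ to some $u^{(2)}$, which of course agrees with $u^{(1)}$ on $\Omega_1$. Iterating and taking the Cantor diagonal $u_{\rho(n)}$ produces a single subsequence converging in $L^1(\Omega_k)$ for every $k$, hence in $L^1_{\text{loc}}(\Omega)$, to a well-defined limit $u$ that is locally integrable on $\Omega$.

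Finally, I would verify that $u \in \text{BV}(\Omega)$ by invoking the lower semicontinuity proposition already stated: since $u_{\rho(n)}\to u$ in $L^1_{\text{loc}}(\Omega)$, we get
\[
\int_\Omega |Du| \le \liminf_n \int_\Omega |Du_{\rho(n)}| \le M < \infty,
\]
and a Fatou-type argument on the $L^1$ norms (or the local $L^1$ convergence plus uniform bound) gives $u\in L^1(\Omega)$, so $u\in \text{BV}(\Omega)$. The main obstacle, in my view, is the mollification book-keeping required to justify the uniform $W^{1,1}$ bound from the uniform $\text{BV}$ bound while maintaining compatibility across the nested subdomains; once that technical step is clean, the Rellich--Kondrachov step and the diagonal extraction are essentially mechanical.
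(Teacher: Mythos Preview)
The paper does not prove this proposition at all: it is listed among the ``definitions and most important properties'' of $\text{BV}(\Omega)$ recalled from the references \cite{Evans1992,Giusti1994,Ambrosio2000,Ambrosio2001}, with no argument given. So there is no in-paper proof to compare your proposal against; what you have written is a self-contained attempt at a standard textbook result.

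Your overall strategy---exhaust $\Omega$ by nice subdomains, obtain local $L^1$ precompactness via mollification, run a diagonal extraction, and conclude $u\in\text{BV}(\Omega)$ from lower semicontinuity---is the right one and is essentially how the cited references proceed. There is, however, one genuine soft spot in the write-up. You apply Rellich--Kondrachov to the mollified family $(\rho_\varepsilon * u_n)_n$ and then immediately assert ``this produces a subsequence of $(u_n)_n$ converging in $L^1(\Omega_1)$''. Rellich--Kondrachov only hands you a convergent subsequence of the \emph{mollified} functions; to transfer this to $(u_n)_n$ you need the approximation $\rho_\varepsilon * u_n \to u_n$ in $L^1(\Omega_k)$ to be \emph{uniform in $n$}. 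That uniformity is exactly where the $\text{BV}$ bound enters, via the estimate
\[
\lVert \rho_\varepsilon * u_n - u_n \rVert_{L^1(\Omega_k)} \;\le\; C\,\varepsilon\,|Du_n|(\Omega) \;\le\; C\,\varepsilon\,M,
\]
after which a total-boundedness argument (the original family is uniformly close to a precompact family, hence is itself precompact) closes the gap. You flag ``mollification book-keeping'' as the main obstacle, but you locate it in the $W^{1,1}$ bound, which is in fact the easy part ($\lVert \nabla(\rho_\varepsilon * u_n)\rVert_{L^1} \le |Du_n|(\Omega)$); the delicate point is this uniform-in-$n$ approximation. Once you make that explicit, the rest of your argument goes through as written.
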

Thanks to preceding results on BV spaces, we state and prove the main result of the paper:
\begin{thm}
 There exists $(\varphi,\lambda,V,I_{in},I_{out})$ as a minimizer of the segmentation functional $F$ (\ref{F}).
\end{thm}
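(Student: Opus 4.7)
The plan is the standard direct method of the calculus of variations: bound the functional from below, extract a convergent subsequence from a minimizing sequence, and verify that each term $F_i$ is lower semicontinuous along that subsequence.

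First I would fix the admissible class: $\varphi$ in $W^{1,2}(\Omega)\cap L^\infty(\Omega)$ (so that the term $(|\nabla\varphi|-1)^2$ makes classical sense, and so that $\varphi$ has a well-defined trace and zero level set), $I_{in},I_{out}\in H^1(\Omega)$, and the finite-dimensional parameters $\lambda\in\mathbb{R}^p$, $V\in\mathbb{R}^4$ constrained to a prescribed compact set $K$ of admissible shape coefficients and rigid transformations. Such a compact restriction on $(\lambda,V)$ is essentially unavoidable, since $F$ has no direct coercivity in those variables. Because every integrand in $F_1,\dots,F_4$ is pointwise non-negative, $F\ge 0$, so a minimizing sequence $(\varphi_n,\lambda_n,V_n,I_{in,n},I_{out,n})$ exists with $F_n\to m:=\inf F<+\infty$.

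Second, I would extract uniform bounds. From $F_1(\varphi_n)\le C$ and the triangle inequality, $\||\nabla\varphi_n|\|_{L^2}\le C^{1/2}+|\Omega|^{1/2}$, so $(\nabla\varphi_n)$ is bounded in $L^2(\Omega)$; combined with an $L^\infty$ bound built into the admissible class, $(\varphi_n)$ is uniformly bounded in $W^{1,2}(\Omega)$ and in $BV(\Omega)$. From the $|\nabla I_{in}|^2$ and $|\nabla I_{out}|^2$ parts of $F_4$ together with the fidelity terms and $I\in L^2$, the sequences $(I_{in,n}),(I_{out,n})$ are bounded in $H^1(\Omega)$. The parameters $(\lambda_n,V_n)$ live in the compact set $K$. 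Using Proposition \ref{compac} (BV compactness), Rellich--Kondrachov and Bolzano--Weierstrass, I extract subsequences with $\varphi_n\to\varphi$ in $L^1_{loc}(\Omega)$ and a.e., $\nabla\varphi_n\rightharpoonup\nabla\varphi$ weakly in $L^2$, $I_{in,n}\rightharpoonup I_{in}$, $I_{out,n}\rightharpoonup I_{out}$ weakly in $H^1$ and strongly in $L^2$, and $\lambda_n\to\lambda$, $V_n\to V$.

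Third, I would pass to the limit term by term. For $F_1$, the integrand is a convex function of $\nabla\varphi$, so weak $L^2$-lower semicontinuity yields $F_1(\varphi)\le\liminf F_1(\varphi_n)$. For $F_2$, I would use the coarea interpretation $\int\delta(\varphi)|\nabla\varphi|\,\mathrm{d}x=\int_\Omega|DH(-\varphi)|$, turning $F_2$ into a weighted perimeter of $\{\varphi<0\}$ with weight $w_n(x)=\xi g(x)+\tfrac{\gamma}{2}\phi^2(\lambda_n,h_n(x))$; since $w_n\to w$ uniformly on $\Omega$ by continuity of the prior shape in $(\lambda,V)$ on $K$, and $H(-\varphi_n)\to H(-\varphi)$ in $L^1$ (from a.e.\ convergence off the negligible set $\{\varphi=0\}$), the weighted-BV lower semicontinuity gives $F_2(\varphi,\lambda,V)\le\liminf F_2(\varphi_n,\lambda_n,V_n)$. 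For $F_3$, dominated convergence handles the Heaviside once a.e.\ convergence of $\varphi_n$ and $|\{\varphi=0\}|=0$ are known. For $F_4$, strong $L^2$-convergence of the intensities, pointwise convergence of $H(\phi(\lambda_n,h_n(\cdot)))$ off a negligible set, and weak $H^1$-lower semicontinuity of the gradient terms (applied after splitting each integrand as a fixed-sign measure against a converging characteristic function) give the inequality; the Hausdorff-measure term depends only on $(\lambda,V)$ and is continuous in these parameters under the regularity of admissible priors. Summing the four inequalities yields $F(\varphi,\lambda,V,I_{in},I_{out})\le\liminf F_n=m$, so the limit is a minimizer.

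The main obstacle is the simultaneous treatment of the level-set variable $\varphi$ and the shape-prior parameters $(\lambda,V)$ inside the discontinuous integrands $\delta(\varphi)$ and $H(\pm\phi(\lambda,h(x)))$: one has to pair strong convergence of the weights with weak/BV lower semicontinuity of the geometric measures, while ensuring that $\{\varphi=0\}$ and $\{\phi(\lambda,h)=0\}$ are negligible in the limit. This is the step where the problem is genuinely subtle and where one must either bake suitable non-degeneracy into the admissible class or invoke coarea arguments to average out the critical level. Modulo this, the remaining steps are standard applications of BV compactness, weak $H^1$-lower semicontinuity, and Fatou's lemma.
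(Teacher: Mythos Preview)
Your overall strategy---direct method, compact parameter domain for $(\lambda,V)$, BV/Sobolev compactness for the infinite-dimensional unknowns, and term-by-term lower semicontinuity along a minimizing subsequence---is exactly the route the paper takes. The paper likewise splits $F$ into pieces (there called $G=\tfrac{\alpha}{2}F_1+\beta F_3$, $K$ for the Mumford--Shah part of $F_4$, and $L=F_2$ plus the $\mathcal{H}^1$ term), restricts $(\lambda,V)$ to an explicit compact box, rewrites $F_2$ and the length term as weighted total variations $\int f\,|D\chi_W|$ and $\int|D\chi_\Theta|$ (your coarea step), and passes to the limit via BV/GSBV compactness together with the weighted-perimeter lsc argument of Chen et al. So for $F_2$, $F_3$ and $F_4$ you are essentially aligned with the paper; if anything, your choice of $H^1$ for $I_{in},I_{out}$ and your explicit flagging of the $|\{\varphi=0\}|=0$ issue are more careful.

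There is, however, a genuine gap in your treatment of $F_1$. The integrand $\Psi(p)=(|p|-1)^2$ is \emph{not} convex in $p$: it has a strict local maximum at $p=0$ and attains its minimum on the entire sphere $|p|=1$, so the standard weak-$L^2$ lower semicontinuity for integral functionals of the gradient does not apply. Concretely, take sawtooth functions with $|\nabla\varphi_n|\equiv 1$ and $\nabla\varphi_n\rightharpoonup 0$ in $L^2$; then $F_1(\varphi_n)=0$ for all $n$ while $F_1$ of the weak limit equals $|\Omega|$, so $\liminf_n F_1(\varphi_n)<F_1(\varphi)$ and your inequality fails. The paper does not invoke convexity here; instead it applies BV compactness directly to the auxiliary sequence $\Psi_n:=(|\nabla\varphi_n|-1)^2$ (and separately to $H(-\varphi_n)$) and reads off an $L^1$ limit for $\int\Psi_n$. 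Whether or not one finds that step fully rigorous, you will need something other than convexity---for instance an a priori bound upgrading weak to strong convergence of $\nabla\varphi_n$, or an explicit relaxation/quasiconvexification of $F_1$---to close this part of your argument.
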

\begin{proof}
The proof is based on the direct method of the calculus of variations \cite{Giusti2000}, compactness theorems and  standard techniques of the calculus of variations on BV spaces and its variants.\\
Let $\Omega=[0;~255]^2$, $\lambda=(\lambda_i)_{i=1}^p\in \Omega_\lambda=\underset{1\leq i\leq p}\prod[-3x_i;3x_i]$; $x_i,i\in[[1;~p]]$, being the resulted eigenvalues from the 
$PCA$, $V=[\tau,~\theta,~T]\in \Omega_V=[0;~255]\times[-\pi;~\pi]\times[0;~255]^2$, and let $J=I_{in}$ and $\bar{J}=I_{out}$.\\

Let now $(\varphi_n)_n, (\lambda_n)_n, (V_n)_n, (J_n)_n$ and $(\bar{J}_n)_n$ be minimizing sequences of $F$ (\ref{F});~i.e.
\begin{equation}
\underset{n}\lim~ F(\varphi_n, \lambda_n, V_n, J_n, \bar{J}_n)=\inf F.
\end{equation}

\noindent{\bf Step 1:} Let us consider $G$ defined as 
\begin{equation}
G(\varphi)=\dfrac{\alpha}{2}\int_\Omega(|\nabla\varphi|-1)^2 + \beta\int_\Omega g H(-\varphi) \text{ and }\Psi(\varphi)=(|\nabla\varphi|-1)^2.
\end{equation}
Then, $\forall~n\in\mathds{N}$, one has: 
\begin{equation}
G(\varphi_n)=\dfrac{\alpha}{2}\int_\Omega \Psi(\varphi_n)+g H(-\varphi_n).
\end{equation}

$\bullet$ $(\varphi_n)_n$ is a family of level set functions; one can assume then $\varphi_n$ belong to $C^1(\Omega)$, $\forall~n$. For all $n$, let $\Psi_n = \Psi(\varphi_n)$. In addition, since $\Omega$ is compact, then, $\varphi_n$ are bounded on $\Omega$, $\forall~n\geq 0$; thus, $\Psi_n\in\text{BV}(\Omega)$. The compactness result (Proposition~\ref{compac}) guarantees the existence of a subsequence $(\Psi_{\rho(n)})_n$ extracted from $(\Psi_n)$~s.t.: 
\begin{equation}
\Psi_{\rho(n)}\underset{n}\longrightarrow\Psi \text{ in } L^1(\Omega) \text{ sense, and } \int_\Omega |D\Psi|\leq  \underset{n}{\lim \inf}\int_\Omega |D\Psi_{\rho(n)}|.
\end{equation}
 Since $(\Psi_n)_n$ is positive, then, $\Psi\geq 0$; therefore, one has:
\begin{equation}
\label{G1}
\int_\Omega\Psi_n\underset{n}\longrightarrow\int_\Omega\Psi.
\end{equation}

$\bullet$ Let $D_n=\lbrace x\in\Omega;~\varphi_n\leq 0\rbrace$, $n\in\mathds{N}$. Then, $\forall~n\geq 0$, $H(-\varphi_n)=\chi_{D_n}$.\\
Since $g$ is bounded ($-1\leq g \leq 1$) in $\Omega$ and $(\chi_{D_n})_n\subset\text{BV}(\Omega)$, there exists then a function
$k\in\text{BV}(\Omega)$ and a subsequence $(\chi_{D_{\rho(n)}})_n$ (without loss of generality, we take the same extractor function
$\rho$ since the composition of extractors is also an extractor) s.t. $g\chi_{D_{\rho(n)}}\underset{n}\longrightarrow g\,k$ in $L^1(\Omega)$ sense. As in the previous case, the positivity of $(g\cdotp\chi_{D_{\rho(n)}})_n$ and $k$ yields then:
\begin{equation}
\label{G2}
\int_\Omega g\chi_{D_{\rho(n)}}\underset{n}\longrightarrow~\int_\Omega g\,k.
\end{equation}
(\ref{G1}) and (\ref{G2}) prove the existence of a minimizer for $G$.\\

\noindent{\bf Step 2:}  Let us consider $K$ in the following 
\begin{multline}
K(\lambda,V,J,\bar{J}) = \int_\Omega \lvert I - J\rvert^2 + \mu\lvert\nabla J\rvert^2 H(\phi(\lambda, h(\cdotp)))
+ \int_\Omega \lvert I - \bar{J}\rvert^2 \\
+ \mu\lvert\nabla \bar{J}\rvert^2 (1 - H(\phi(\lambda,h(\cdotp)))).
\end{multline}
 Previously defined minimizing sequences also satisfy then: \\$\underset{n}\lim~K(\varphi_n, \lambda_n, V_n, J_n, \bar{J}_n)=\inf K$. $H$ can be written as an indicator function by considering the set $\Theta=\lbrace x\in\Omega;~\phi(\lambda,h(x))\geq 0\rbrace$; thus, one has: \\$H(\phi(\lambda,h(x)))=\chi_\Theta$. It is clear that $\chi_\Theta$ depends continuously on $\lambda$ and $V$ 
parameters; since that is the case for $\Theta$. Let us introduce the function 
\begin{equation}
\varsigma(\cdotp)=|I - \cdotp|^2+|\nabla \cdotp|^2.
\end{equation}
 Then, one has: 
\begin{equation}
K(\lambda,V,J,\bar{J}) = \int_\Omega \varsigma(J)\chi_\Theta + \varsigma(\bar{J})(1-\chi_\Theta).
\end{equation}
 Let then introduce sequences $(\chi_{\Theta_n})_n$ and $(\varrho_n)_n$, defined $\forall~n\in\mathds{N}$, by:
\begin{equation} 
 \chi_{\Theta_n} = \chi_{\Theta}(\lambda_n,V_n) \text{ and } \varrho_n=\varsigma(J_n)\chi_{\Theta_n} + \varsigma(\bar{J}_n)(1-\chi_{\Theta_n}).
\end{equation} 
Then, thanks to the compactness-lower semi continuity result in $GSBV(\Omega)$ \cite{Ambrosio2013}, there exists $\varrho\in~GSBV(\Omega)$ and a subsequence $\varrho_{\rho(n)}$ s.t. $\varrho_{\rho(n)}
\underset{n}\longrightarrow \varrho$ in $L^p$ sense; $1\leq p <2$; moreover, one has: 
\begin{equation}
\int_\Omega\varsigma(J)\chi_\Theta\leq\underset{n}{\lim \inf}\int_\Omega \varsigma(J_n)\chi_{\Theta_{\rho(n)}} 
\end{equation}
and
\begin{equation}
\int_\Omega\varsigma(\bar{J})(1-\chi_\Theta)\leq\underset{n}{\lim \inf}\int_\Omega \varsigma(\bar{J}_n)(1-\chi_{\Theta_{\rho(n)}}).
\end{equation}
Thus, the existence of a minimizer of $K$.\\

\noindent{\bf Step 3:} Let \\$L(\varphi,\lambda,V) = \int_\Omega [\xi g + \dfrac{\gamma}{2}\phi^2(\lambda,h(x))]\delta(\varphi)
\lvert\nabla\varphi\rvert\, \mathrm{d} x + \zeta\int_\Omega d\mathcal{H}^1(C(\lambda,h(x)))$.\\

$\bullet$ One has \cite{Evans1992}: 
\begin{equation}
\int_\Omega d\mathcal{H}^1(C(\lambda,h(x))) = \int_\Omega\delta(\phi(\lambda, h(x))) |\nabla
\phi(\lambda, h(x))|\mathrm{d} x.
\end{equation}
$\Omega$ is a compact set, and since $(\phi_i)_i$ is a family of signed distance functions, then,
$\phi$ is smooth enough. Let us assume $\phi\in C^1(\Omega)$ and consider again the set $\Theta=\lbrace x\in\Omega;~\phi(\lambda,h(x))\geq 0\rbrace$ and the subsequence $(\chi_{\Theta_n})_n$ defined $\forall~n\in\mathds{N}$ by $\chi_{\Theta_n} = \chi_{\Theta}(\lambda_n,
V_n)$. Then, one also has \cite{Evans1992,Ambrosio2001}:
\begin{equation}
\int_\Omega d\mathcal{H}^1(C(\lambda,h(x))) = \int_\Omega|D\chi_\Theta|.
\end{equation} 
Because $\phi\in\text{BV}(\Omega)$, then, $\forall~n,~\Theta_n$ has finite perimeter;~i.e.~
$\chi_{\Theta_n}\in\text{BV}(\Omega),~\forall~n$. Consequently, there exists $k\in\text{BV}(\Omega)$ and a subsequence $\chi_{\Theta_{\rho(n)}}$ s.t. $\chi_{\Theta_{\rho(n)}}\underset{n}\longrightarrow k$ in $L^1(\Omega)$ sense. Since $\chi_{\Theta_{\rho(n)}}$ are characteristic functions, $k$ is also a characteristic function; let then $k=\chi_\Theta$.\\
Let now $\varphi\in C_0^1(\Omega,\mathds{R}^2)$~s.t.~$\lVert\varphi\rVert\leq 1$. Because $\chi_{\Theta_{\rho(n)}}\underset{n}\longrightarrow \chi_\Theta$ in $L^1(\Omega)$ sense, then, one has:
\begin{equation}
\int_\Omega \chi_\Theta\text{div}(\varphi)=\underset{n}\lim\int_\Omega\chi_{\Theta_{\rho(n)}}\text{div}(\varphi)
\leq\underset{n}\lim\int_\Omega|D\chi_{\Theta_{\rho(n)}}|.
\end{equation}
Thus, by taking the supremum over all $\varphi$, one has:
\begin{equation}
 \label{L1}
 \int_\Omega d\mathcal{H}^1(C(\lambda,h(x))) =\int_\Omega |D\chi_\Theta| \leq  \underset{n}{\lim \inf}\int_\Omega |D\chi_{\Theta_{\rho(n)}}|.
\end{equation}

$\bullet$ Let $W=\lbrace x\in\Omega;~\varphi(x)\geq 0 \rbrace$ and $f(\cdotp,\lambda,V)=\xi g(\cdotp) + \dfrac{\gamma}{2}\phi^2(\lambda,h(\cdotp))$
defined on $\Omega$. Following \cite{Evans1992,Chan2001}, then one has: 
\begin{equation}
F_2(\chi_W,\lambda,V)=\int f(\cdotp,\lambda,V)|D\chi_W|.
\end{equation}
Let us consider now the sequence $(f_n)_n$ defined for all $n\in\mathds{N}$ by $f_n(\cdotp)=f(\cdotp,\lambda_n,V_n)$ in $\Omega$, where 
$(\lambda_n)_n$ and $(V_n)_n$ are previously defined sequences. It is clear that $(f_n)_n$ is a positive sequence, since $f\geq 0$.
In addition, $g, \phi^2\in C^\infty(\Omega)$ and $\Omega$ is compact; then, $(f_n)_n\subset\text{BV}(\Omega)$. Therefore, there
exists subsequences $(\lambda_{\rho(n)})_n$ and $(V_{\rho(n)})_n$ s.t.: $f_{\rho(n)}(\cdotp)=f(\cdotp,\lambda_{\rho(n)},V_{\rho(n)})\underset{n}\longrightarrow~f(\cdotp)$ in $L^1(\Omega)$ sense, and 
\begin{equation}
\int_\Omega f(\cdotp,\lambda,V) \leq\underset{n}{\lim \inf}\int_\Omega f(\cdotp,\lambda_{\rho(n)},V_{\rho(n)}).
\end{equation}
 Tychonoff's theorem ensures that $\Omega, \Omega_V$ and $\Omega_\lambda$ are compact sets; then, there exists subsequences $(\lambda_{\rho(n)})_n$ and $(V_{\rho(n)})_n$ s.t:
$\lambda_{\rho(n)}\underset{n}\longrightarrow\lambda$ and $V_{\rho(n)}\underset{n}\longrightarrow V$. Since $f$ is continuous, then, one has $f(\cdotp,\lambda_{\rho(n)},V_{\rho(n)})\underset{n}\longrightarrow f(\cdotp,\lambda, V)$.\\

Following \cite{Chen2002}, let $\varepsilon> 0$. There exists then $N\in\mathds{N}$~s.t.~$\forall~n\geq N$, one has $|f_{\rho(n)}(\cdotp) - f(\cdotp)|\leq\varepsilon$. Let us define $\Psi=\lbrace\psi\in C_0^1(\Omega,\mathds{R});~\lVert\psi\rVert\leq 1 \text{ in } \Omega\rbrace$ and $\Psi_\omega=\lbrace\psi\in C_0^1(\Omega,\mathds{R});~\lVert\psi\rVert\leq \omega \text{ in } \Omega\rbrace$, for any positive and continuous function $\omega:\Omega\longrightarrow\mathds{R}$. 

 Now, let $n\in\mathds{N}$~s.t.~$n\geq N$ and let $\varphi\in\Psi_{f-\varepsilon}$. Then, $\varphi\in\Psi_f$.
We previously show that $\forall~n\in\mathds{N},~\varphi_n\in\text{BV}(\Omega)$ implying $W_n=\lbrace x\in\Omega;~\varphi_n(x)\geq 0\rbrace$ has a finite perimeter;~i.e.~$\chi_{W_n}\in\text{BV}(\Omega),~\forall~n\in\mathds{N}$. As in the previous case, there exists then a subsequence $\chi_{W_{\rho(n)}}$~s.t.~$\chi_{W_{\rho(n)}}\underset{n}\longrightarrow \chi_W$ in $L^1(\Omega)$ sense. Then, one has:
\begin{equation} 
 \int_\Omega \chi_W\text{div}(\varphi)=\underset{n}\lim\int_\Omega\chi_{W_{\rho(n)}}\text{div}(\varphi).
\end{equation} 
  Since $\Psi_{f-\varepsilon}  \subseteq\Psi_{f}$, then: 
\begin{equation}  
  \int_\Omega \chi_W\text{div}(\varphi)\leq \underset{n}\lim\int_\Omega f_{\rho(n)}|D\chi_{W_{\rho(n)}}|.
\end{equation}  
   Due to the fact 
that $n\geq N$, $\varphi\in\Psi_{f-\varepsilon}$ and $\varepsilon>0$ are arbitrary chosen; then: 
\begin{equation}
\underset{\varphi\in\Psi_{f-\varepsilon}}\sup \int_\Omega \chi_W\text{div}(\varphi)\leq \underset{n}\lim\int_\Omega f_{\rho(n)}|D\chi_{W_{\rho(n)}}|.
\end{equation}
 Thus,
by letting $\varepsilon\rightarrow 0$, one has:
\begin{equation}
\label{L2}
 \int_\Omega f|D\chi_W|=\underset{\varphi\in\Psi_f}\sup\int_\Omega \chi_W\text{div}(\varphi)\leq \underset{n}\lim\int_\Omega 
f_{\rho(n)}|D\chi_{W_{\rho(n)}}|.
\end{equation}
(\ref{L1}) and (\ref{L2}) prove the existence of a minimizer of $L$. 
\end{proof}
\section{Conclusion}\label{sect4}

A proof of existence of a minimizer of a non-convex variational segmentation functional is provide here. That is important, since it guarantees the good evolution of the segmentation model towards the desired objects, and it can be seen as a completion of a previous work where both the efficiency and robustness of the segmentation model were already demonstrated without that proof. However, since the functional is not convex, the existence of a minimum does not guarantee the uniqueness. We plan to investigate that for a future work.
\bibliographystyle{plain}
\bibliography{biblio}
\end{document}